\newtheorem{thm}{Theorem}[section]
\theoremstyle{Condition}
\newtheorem{lem}{Lemma}[section]
\newtheorem{rem}{Remark}[section]
\theoremstyle{Problem}
\theoremstyle{Assumption}
\newtheorem{assum}{Assumption}[section]
\theoremstyle{Definition}
\numberwithin{equation}{section}
\def\beq{\begin{equation}}
\def\deq{\end{equation}}
\def\cD{{\mathcal D}}
\def\cF{{\mathcal F}}
\def\mE{{\mathbb E}}
\def\mI{{\mathbb I}}
\def\mM{{\mathbb M}}
\def\mN{{\mathbb N}}
\def\mP{{\mathbb P}}
\def\mR{{\mathbb R}}
\def\sB{{\mathscr B}}
\def\sF{{\mathscr F}}
\def\geq{\geqslant}
\def\leq{\leqslant}
\def\HS{{\mathrm{\tiny HS}}}
\def\a{\alpha}
\def\b{\beta}
\def\d{\delta}
\def\s{\sigma}
\def\[{{\Big[}}
\def\]{{\Big]}}
\def\<{{\langle}}
\def\>{{\rangle}}
\def\({{\Big(}}
\def\){{\Big)}}
\def\dif{{\rm d}}
\def\no{\nonumber}
\def\={&\!\!=\!\!&}
\def\bt{\begin{theorem}}
\def\et{\end{theorem}}
\def\bl{\begin{lemma}}
\def\el{\end{lemma}}
\def\br{\begin{rem}}
\def\er{\end{rem}}
\begin{document}

\title
[Euler-Maruyama scheme for SDEs]
{Euler-Maruyama scheme for SDEs with Dini continuous coefficients}

\author{Zhen Wang, Yu Miao, Ren Jie}

\address{Zhen Wang: College of Mathematics and Information Statistics, Henan Normal University, Xinxiang, Henan, 453000, P.R.China}
\email{wangzhen881025@163.com}
\address{Yu Miao: College of Mathematics and Information Statistics, Henan Normal University, Xinxiang, Henan, 453000, P.R.China}
\email{yumiao728@gmail.com}
\address{Ren Jie: College of Mathematics and Information Statistics, Henan University of Economics and Law, Zhengzhou, Henan, 450000, P.R.China}
\email{20130006@huel.edu.cn}

\thanks{This work is partially supported by the NNSFC grants of China (No. 11971154) and (No.11901154) }

\begin{abstract}
In this paper, we show the convergence rate of Euler-Maruyama scheme for non-degenerate SDEs with Dini continuous coefficients, by the aid of the regularity of the solution to the associated Kolmogorov equation. We obtain the same conclusions using a simple and clever way to simplify the proof and weaken the conditions in \cite{BHY} by the properties of Dini continuous and Taylor expansion.

Keywords: Non-degenerate, Stochastic differential equation, Euler-Maruyama scheme, Dini continuous, Kolmogorov equation.
\end{abstract}

\subjclass[2000]{}

\maketitle

\section{Introduction}
Let fix $T>0$. Considering the following stochastic differential equation in $\mR^d$:
\beq\label{1}
X_t=X_0+\int_0^tb(s,X_s)\dif s+\int_0^t\s(s,X_s)\dif W_s,\ \ X_0\in\mR^d,
\deq
where $b:[0,T]\times\mathbb{R}^d\rightarrow \mathbb{R}^d$
and $\sigma:[0,T]\times\mathbb{R}^d\rightarrow
\mathbb{R}^d\otimes\mathbb{R}^d$ are two Borel measurable functions, $\{W_t,t\in[0,T]\}$ is an $d$-dimensional
standard Brown motion defined on a complete filtered probability space $(\Omega, \mathscr{F}, \mP; (\sF_t)_{t\geq 0})$,
and the initial value $X_0$ is $\cF_0$-measurable $\mathbb{R}^d$-valued random variable.

The Euler-Maruyama scheme of (\ref{1}) is
\beq\label{2}
 Y_t=Y_0+\int_0^tb(\eta_{\delta}(s),Y_{\eta_{\delta}(s)})\dif s+\int_0^t\s(\eta_{\delta}(s),Y_{\eta_{\delta}(s)})\dif W_s,\ \ X_0=Y_0,
\deq
where  $\delta:=\frac{T}{N}\in(0,1)$, $\eta_{\delta}(s):=\left[\frac{s}{\delta}\right]\delta$, $s\in\left[\left[\frac{s}{\delta}\right]\delta,(\left[\frac{s}{\delta}\right]+1)\delta\right)$, for the sufficiently large integer $N\in\mN$. And the discretized  scheme of (\ref{2})
is analytically tractable on computer application of engineering, physical, finance, biology, etc..

If the coefficient of SDE is Lipschitz continuous, there are many previous research results.
If $\sigma$ is an identity matrix and the coefficient $b$ is Lipschitz continuous in space and $\frac{1}{2}$-H\"older continuous in time then for any $p>0$, there exists $C_p>0$ such that the  Euler-Maruyama scheme is the strong rate of $\frac{1}{2}$ (see for example \cite{KP}). Yan \cite{Y} proved the rate of convergence in $L^1$-norm sense for a range of SDEs, where the drift coefficient is Lipschitz and the diffusion coefficient is H\"older continuous, by means of the Meyer Tanaka formula. \cite{GR} extended \cite{Y} to the convergence rate in $L^p$-norm, by the Yamada-Watanabe approximation. Our finding partly improves upon recent results in \cite{MT} \cite{NT} and \cite{GR}, as well as the well-known ones in \cite{HK} \cite{G}.

However ,in many applications, the coefficients $b$ and $\s$ are not Lipschitz continuous.
Zhang \cite{Z} is proved Euler-Maruyama approximation for SDE to converge uniformly to the solution in $L^p$-space with respect to the time and starting points under non-Lipschitz coefficients. If the drift coefficient is the Dini-H\"older continuous, Gy\"ongy and R\'asonyi \cite{GR} implied the order of strong rate of convergence for one dimensional SDEs. And the case of $d$-dimension is introduced in \cite{NT}, using a Yamada-Watanabe approximation technique (see \cite{YW}). Ngo and Taguchi \cite{NT1} showed that the rate under the diffusion coefficient is bounded variation and  H\"older continuous. In \cite{PT}, the diffusion coefficient $\sigma$ is  an identity matrix, the drift coefficient $b$ is bounded $\beta$-H\"older continuous with $\beta\in(0,1)$ in space and $\eta$ -H\"older continuous in time with $\eta\in[1/2,1]$, then for any $p\geq 1$, the strong rate of convergence can be obtained.
Bao, Huang and Yuan \cite{BHY} discussed the strong convergence rate of Euler-Maruyama for non-degenerate SDE with rough coefficients, where the drift term is Dini-continuous and unbounded, by the regularity of non-degenerate Kolmogrov equation. 

In this paper, we first study the convergence rate of the Euler-Maruyama scheme of (\ref{2}), where the drift term $b$ and the diffusion term $\s$ are the uniformly bounded, $b$ and $\s$ satisfy correlated conditions of Dini-continuous (see Assumption \ref{assum-1}), which is weaken the the conditions, simply the proof and obtains the same results in \cite{BHY}. In addition, we also prove convergence rate for the non-degenerate SDEs with unbounded coefficients, which method is mainly based on the regularity of the solution to the Kolmogorov equation associated to the SDE (\ref{1}).

This paper is structured as follows. In the next section, we introduce some notations and the main results. All proofs are deferred to Section 3.
\section{Main results}
\subsection{Notations}
~\\

In this section, we recall the foundational definition and notations involved in the paper.

     Let $\sB(\mR^d)$ be the Borel-$\s$-algebra on $\mR^d$. Set $\nabla:=D=(\frac{\partial}{\partial x_1},\cdots,\frac{\partial}{\partial x_d})^*$, $D^2=(\frac{\partial^2}{\partial x_i\partial x_j})_{1 \leq i,j\leq d}$ and $\Delta=\sum_{i=1}^d\frac{\partial^2}{\partial x_i^2}$, where $*$ is the transpose of a vector or matrix. Take $\|\cdot\|$ and $\|\cdot\|_{\HS}$ stand for the usual operator norm and the Hilbert-Schmidt norm, respectively.

     Meanwhile, we introduce some space of function:
~\\
$\bullet$ $\|f\|_{T,\infty}=\sup_{t\in[0,T],x\in\mR^d}\|f(t,x)\|$, where an operator-valued map $f$ is on $[0,T]\times\mR^d$.
~\\
$\bullet$ $\mM_{\mathrm{non}}^d$ denotes the collection of all nonsingular $d\times d$-matrices.
~\\
$\bullet$ $C_b^{\beta}(\mR^d,\mR^k), \beta\in(0,1)$ denotes the set of all function from $\mR^d$ to $\mR^k$ which are bounded and $\beta$-H\"older continuous functions. Hence if $f\in C_b^{\beta}(\mR^d,\mR^k)$, then
$$
\sup_{x,y\in\mR^d,x\neq y}\frac{|f(x)-f(y)|}{|x-y|^{\beta}}<\infty.
$$
~\\
$\bullet$ For $a<b$, we write $C_b^{\beta}([a,b])$ for $C([a,b];C_b^{\beta}(\mR^d,\mR^d))$ and define the norm $\|\cdot\|_{C_b^{\beta}([a,b])}$ on $C_b^{\beta}([a,b])$ by
$$
\|f\|_{C_b^{\beta}([a,b])}:=\sup_{t\in[a,b],x\in\mR^d}|f(t,x)|+\sup_{t\in[a,b],x\neq y}\frac{|f(t,x)-f(t,y)|}{|x-y|^{\beta}}.
$$

Throughout the paper, we denote the constant as $C$, the shorthand notation $a \preceq b$ stands for $a\leq Cb$. And $C$ represents a positive constant although its value may change from one appearance to the next.
\subsection{Main results}
~\\

In this paper, we study the convergence rate of Euler-Maruyama scheme,
under the following non-Lipschitz condition. In this section, we state the related assumptions and  main theorems of this paper.

Let $\cD_0$ be the family of Dini function, i.e.,
$$
\cD_0:=\left\{\phi\Big|\phi:\mR_+\rightarrow\mR_+ \ \ \mathrm{is}\ \ \mathrm{increasing}\ \ \mathrm{and}\ \  \int_0^1\frac{\phi(s)}{s}\dif s<\infty\right\}.
$$
A function $f:\mR^d\rightarrow\mR^d$ is called Dini-continuity if there exists $\phi\in\cD_0$ such that $|f(x)-f(y)|\leq \phi(|x-y|)$ for any $x,y\in\mR^d$. It is well known that every Dini-continuous function is continuous and every Lipschitz continuous function is Dini-continuous. Moreover, if $f$ is H\"older continuous, then $f$ is Dini-continuous, but not vice versa.
And set
$$
\cD:=\left\{\phi\in\cD_0|\phi^2 \ \  \mathrm{is}\ \ \mathrm{concave} \right\},
$$
for instance, a function $f$ is H\"older-Dini continuous of order $\a\in(0,1)$.

The non-Lipschitz assumptions is following:

\begin{assum}\label{assum-1}
\begin{enumerate}[(a)]
~
\item for every $t\in[0,T]$ and $x\in\mR^n$, $\s(t,x)\in\mM_{\mathrm{non}}^n$, and
$$
\|b\|_{T,\infty}+\|\s\|_{T,\infty}<+\infty,
$$
where $\|\s\|_{T,\infty}:=\sup_{0\leq t\leq T}\|\s(t,x)\|_{\mathrm{HS}}$.
\item For any $t\in[0,T]$, $\b\in(0,1)$ and $x,y\in\,\mR^d$, there exists $\phi\in\cD$ such that\\
 (regularity of $b$ and $\s$ w.r.t. spatial variables)
$$
|b(t,x)-b(t,y)|\leq |x-y|^{\b}\phi(|x-y|),\ \  \|\s(t,x)-\s(t,y)\|_{\HS}\leq \phi(|x-y|).
$$
\item For any $s, t\in [0,T]$ and $x\in\,\mR^d$, there exists $\phi\in\cD$ such that\\
(regularity of $b$ and $\s$ w.r.t. time variables)
$$
|b(s,x)-b(t,x)|+\|\s(s,x)-\s(t,x)\|_{\HS}\leq \phi(|s-t|).
$$
\end{enumerate}
\end{assum}

The following main results are stated including the convergence rate of SDEs.

\begin{thm}\label{thm1}
Suppose that Assumption \ref{assum-1} holds. For $p\geq1$ and $\b\in(0,1)$, there exists the constant $C>1$ depending on $T, p, d, \|\s\|_{T,\infty}, M,\b$, then
$$
\mE\left(\sup_{0\leq t\leq T}\left|X_t-Y_t\right|^p\right)\leq C\delta^{p/2}.
$$
\end{thm}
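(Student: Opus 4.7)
The plan is to follow a Zvonkin-type transformation based on the associated non-degenerate Kolmogorov equation and then compare It\^o expansions of $\Theta(t,X_t)$ and $\Theta(t,Y_t)$ for a suitable near-identity map $\Theta$.

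First I would solve the vector-valued Kolmogorov equation
\begin{equation*}
\p_t u(t,x) + \tfrac{1}{2}\tr(\s\s^*(t,x) D^2 u(t,x)) + b(t,x)\cdot\nabla u(t,x) = -b(t,x), \qquad u(T,\cdot)\equiv 0,
\end{equation*}
on $[0,T]\times\mR^d$. Under Assumption \ref{assum-1}, the non-degeneracy of $\s$ together with the Dini regularity of $b,\s$ yields, via the parabolic Schauder-type estimates for equations with Dini coefficients, that $u\in C^{1,2}_b$ with $\|\nabla u\|_{T,\infty}$ and $\|D^2 u\|_{T,\infty}$ finite. Subdividing $[0,T]$ into finitely many subintervals of small length if necessary, I may further assume $\|\nabla u\|_{T,\infty}\leq 1/2$, so that $\Theta(t,x):=x+u(t,x)$ is a bi-Lipschitz diffeomorphism satisfying $|x-y|\leq 2|\Theta(t,x)-\Theta(t,y)|\leq 3|x-y|$.

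By construction, It\^o's formula applied to $\Theta(t,X_t)$ kills the drift entirely:
\begin{equation*}
\Theta(t,X_t) = \Theta(0,X_0) + \int_0^t(\nabla\Theta\cdot\s)(s,X_s)\,\dif W_s.
\end{equation*}
For $Y_t$, the evaluation of the coefficients at the frozen point $(\eta_\d(s),Y_{\eta_\d(s)})$ rather than $(s,Y_s)$ leaves a residual drift
\begin{equation*}
R_s := \nabla\Theta(s,Y_s)\bigl[b(\eta_\d(s),Y_{\eta_\d(s)})-b(s,Y_s)\bigr] + \tfrac12\tr\bigl([(\s\s^*)(\eta_\d(s),Y_{\eta_\d(s)})-(\s\s^*)(s,Y_s)]D^2\Theta(s,Y_s)\bigr).
\end{equation*}
Subtracting the two expansions, I would bound $\mE\sup_{s\leq t}|\Theta(s,X_s)-\Theta(s,Y_s)|^p$ by BDG on the stochastic integrals, H\"older's inequality on $\int_0^t R_s\,\dif s$, and the increment estimates from Assumption \ref{assum-1}(b)--(c),
\begin{align*}
|b(\eta_\d(s),Y_{\eta_\d(s)})-b(s,Y_s)| &\leq |Y_s-Y_{\eta_\d(s)}|^{\b}\phi(|Y_s-Y_{\eta_\d(s)}|)+\phi(\d),\\
\|(\s\s^*)(\eta_\d(s),Y_{\eta_\d(s)})-(\s\s^*)(s,Y_s)\|_{\HS} &\preceq \phi(|Y_s-Y_{\eta_\d(s)}|)+\phi(\d).
\end{align*}
The classical one-step bound $\mE|Y_s-Y_{\eta_\d(s)}|^{2p}\leq C\d^p$, which comes from boundedness of $b,\s$ and BDG applied to \eqref{2}, combined with Jensen's inequality for the concave function $\phi^2$---this is where the restriction $\phi\in\cD$ is essential---converts the $\phi$-contributions into quantities of order $\d^{p/2}$.

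Finally, collecting the $|X_s-Y_s|$-dependent terms arising from the differences $\nabla\Theta(s,X_s)\s(s,X_s)-\nabla\Theta(s,Y_s)\s(s,Y_s)$ in the martingale part onto the left-hand side and applying a Bihari-type (concave) Gronwall inequality yields $\mE\sup_{s\leq T}|\Theta(s,X_s)-\Theta(s,Y_s)|^p\leq C\d^{p/2}$, which transfers to $X_t-Y_t$ via the bi-Lipschitz bound on $\Theta$. The main obstacle I foresee is establishing the $C^{1,2}_b$ regularity of $u$ under merely Dini (not H\"older) coefficients: the Dini integrability encoded in the definition of $\cD_0$ is exactly what guarantees that the parabolic Schauder estimate produces a bounded, continuous Hessian. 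The sharp rate $\d^{1/2}$---rather than $\phi(\d^{1/2})$---then hinges on using the concavity of $\phi^2$ from $\cD$ to absorb the Dini modulus into the linear scaling at the Gronwall step.
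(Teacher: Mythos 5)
Your proposal follows essentially the same route as the paper's proof: solve the backward Kolmogorov equation with source $-b$ on a finite partition of $[0,T]$, use It\^o's formula to trade the Dini drift for terms controlled by $\nabla u$, apply BDG together with the one-step estimate for $|Y_s-Y_{\eta_\delta(s)}|$, and close with Gronwall's inequality interval by interval; your map $\Theta=\mathrm{id}+u$ is the paper's additive correction $u$ repackaged as a Zvonkin transform (the paper keeps $\|\nabla u\|\leq\varepsilon$ small via the factor $(T_j-T_{j-1})^{1/2}$ rather than $\leq 1/2$ for invertibility, but the algebra is the same). Two points of comparison. First, you are more careful than the paper in one respect: your residual drift $R_s$ correctly contains the term $\tfrac12\tr\bigl([(\s\s^*)(\eta_\delta(s),Y_{\eta_\delta(s)})-(\s\s^*)(s,Y_s)]D^2\Theta(s,Y_s)\bigr)$ arising from the mismatch between the quadratic variation of $Y$ and the coefficient appearing in the PDE; the paper's It\^o expansion of $u(t,Y_t)$ silently drops this term. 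Second, there is a genuine gap at your final step: Jensen's inequality with $\phi^2$ concave only yields $\mE\bigl[\phi(|Y_s-Y_{\eta_\delta(s)}|)^p\bigr]\preceq\phi(C\delta^{1/2})^p$, and likewise the time-regularity term contributes $\phi(\delta)^p$; concavity of $\phi^2$ does not convert these into $\delta^{p/2}$ (for instance $\phi(t)=\sqrt{t}$ belongs to $\cD$ but is not $O(t)$ near $0$). The rate $\delta^{p/2}$ requires the linear bound $\phi(t)\leq Mt$, which is precisely what the paper's Lemma \ref{lem1} asserts via the Taylor expansion $\phi(t)\leq\phi'(0)t=:Mt$ --- note that this constant $M$ appears explicitly in the statement of Theorem \ref{thm1} --- so your argument needs that same (strong, and for general $\phi\in\cD$ questionable) ingredient made explicit; without it both your argument and the paper's deliver only a $\phi(\sqrt{\delta})^p$-type rate.
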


\begin{rem}In \cite{BHY}, the diffusion term is the rough coefficient which refers to second order continuous differentiable. However, we check the results with the regularity of the solution to the Kolmogorov equation associated to the SDE (\ref{1}), by the properties of Dini continuous and Taylor expansion instead of second order continuous differentiable, which is a simple and clever way to simplify the proof and weaken the conditions in \cite{BHY}.
\end{rem}
\begin{rem}In the article \cite{BHY}, the convergence rate is verified from the perspective of norm, while we not only get similar results but also do better conclusions using the properties of dimension. At the same time, we can also do the degenerate result with the help of the Hamiltonian system. Because the method is similar, we will not elaborate here.
\end{rem}
It seems to be a little bit stringent that  the coefficients are uniformly bounded, and the drift $b$ is global Dini-continuous,  in Theorem \ref{thm1}. Therefore, the above conditions can be weakened by the means of  uniform boundedness instead of local  boundedness and  global Dini-continous instead of local Dini-continuous, respectively.
\begin{thm}\label{thm2}Assume that for any $s, t\in[0,T]$, $\b\in(0,1)$ and for every $x\in\mR^d$ and $\s(t,x)\in\mM_{\mathrm{non}}^d$, there exists the constant $C_T$, $b$ and $\sigma$ are Borel measurable functions such that
$$
|b(t,x)|+\|\sigma(t,x)\|_{\HS}\leq C_T(1+x),\ \  x\in\mR^d,
$$
And if $b$  and  $\sigma$ satisfy
$$
|b(t,x)-b(t,y)|\leq |x-y|^{\b}\phi_k(|x-y|),\ \ |x|\vee|y|\leq k,
$$
$$
\|\sigma(t,x)-\sigma(t,y)\|_{\HS}\leq \phi_k(|x-y|),\ \ |x|\vee|y|\leq k,
$$
$$
|b(s,x)-b(t,x)|+\|\sigma(s,x)-\sigma(t,x)\|_{\HS}\leq \phi_k(|s-t|),\ \ |x|\leq k,
$$
where $\phi_k\in\cD$.
Then for all $p\geq 1$ and $\mE|X_0|^p<\infty$, it holds that
$$
\lim_{\delta\rightarrow 0}\mE\left(\sup_{0\leq t\leq T}\left|X_t-Y_t\right|^p\right)=0
$$
\end{thm}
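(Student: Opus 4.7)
The plan is a truncation--localization argument reducing Theorem \ref{thm2} to Theorem \ref{thm1}. First, the linear growth hypothesis, combined with the Burkholder--Davis--Gundy and Gronwall inequalities applied to (\ref{1}) and (\ref{2}), yields the uniform bound
\[
\mE\Big(\sup_{0\le t\le T}|X_t|^p\Big)+\sup_{\delta\in(0,1)}\mE\Big(\sup_{0\le t\le T}|Y_t|^p\Big)\leq C_p\bigl(1+\mE|X_0|^p\bigr);
\]
for $Y$ one uses $|b(\eta_\delta(s),Y_{\eta_\delta(s)})|\le C_T(1+|Y_{\eta_\delta(s)}|)$, the analogous bound for $\sigma$, and the observation that $|Y_{\eta_\delta(s)}|\le\sup_{u\le s}|Y_u|$. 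Setting the stopping time $\tau_k:=\inf\{t\ge 0:|X_t|\vee|Y_t|\ge k\}$, Chebyshev's inequality then gives $\mP(\tau_k\le T)\le C/k^p$ uniformly in $\delta$.

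Next, I would freeze the coefficients outside a large ball. Fix a smooth $1$-Lipschitz cutoff $\chi_k:\mR^d\to\mR^d$ with $\chi_k(x)=x$ for $|x|\le k$ and $|\chi_k(x)|\le 2k$ throughout, and put
\[
b^{(k)}(t,x):=b(t,\chi_k(x)),\qquad \sigma^{(k)}(t,x):=\sigma(t,\chi_k(x)).
\]
Since $\chi_k$ is $1$-Lipschitz, its range lies in $\overline{B_{2k}}$, and $\phi_{2k}\in\cD$ is increasing, the local moduli assumed in Theorem \ref{thm2} transfer directly to \emph{global} moduli for $(b^{(k)},\sigma^{(k)})$: e.g.\
\[
|b^{(k)}(t,x)-b^{(k)}(t,y)|\le |\chi_k(x)-\chi_k(y)|^\beta\phi_{2k}(|\chi_k(x)-\chi_k(y)|)\le |x-y|^\beta\phi_{2k}(|x-y|),
\]
with analogous bounds for the spatial modulus of $\sigma^{(k)}$ and the time moduli of both. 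The linear growth condition gives $\|b^{(k)}\|_{T,\infty}+\|\sigma^{(k)}\|_{T,\infty}\le C_T(1+2k)$, and nondegeneracy of $\sigma^{(k)}$ is inherited pointwise from $\sigma$, so Assumption \ref{assum-1} is satisfied by $(b^{(k)},\sigma^{(k)})$. Applying Theorem \ref{thm1} to the associated SDE and its EM scheme with initial datum $X_0$ therefore yields processes $X^{(k)},Y^{(k)}$ with
\[
\mE\Big(\sup_{0\le t\le T}|X^{(k)}_t-Y^{(k)}_t|^p\Big)\leq C_k\,\delta^{p/2}.
\]
On $\{\tau_k>T\}$ both $|X_t|$ and $|Y_t|$ (and hence $|Y_{\eta_\delta(t)}|$) stay strictly below $k$ throughout $[0,T]$, so $\chi_k$ acts as the identity along the trajectories; pathwise uniqueness for the truncated SDE together with a direct comparison of the two EM recursions then force $X\equiv X^{(k)}$ and $Y\equiv Y^{(k)}$ on this event.

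To finish I would split
\[
\mE\Big(\sup_{0\le t\le T}|X_t-Y_t|^p\Big)\le \mE\Big(\sup_{0\le t\le T}|X^{(k)}_t-Y^{(k)}_t|^p\Big)+\mE\Big[\sup_{0\le t\le T}|X_t-Y_t|^p\,\mathbf{1}_{\{\tau_k\le T\}}\Big],
\]
and control the tail term via the Cauchy--Schwarz inequality, the $(2p)$-version of the moment estimate from the first paragraph, and the bound $\mP(\tau_k\le T)\le C/k^p$. This yields an overall estimate of the form $C_k\,\delta^{p/2}+C/k^{p/2}$; letting $\delta\downarrow 0$ with $k$ fixed and then sending $k\to\infty$ gives the stated limit. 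The main obstacle I expect is the verification in the second paragraph that the truncated pair $(b^{(k)},\sigma^{(k)})$ satisfies \emph{every} part of Assumption \ref{assum-1} with a global modulus still belonging to $\cD$; this relies crucially on choosing $\chi_k$ to be $1$-Lipschitz and on the monotonicity of $\phi_k$, after which the moment estimates and the splitting step are routine.
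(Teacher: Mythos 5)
Your proposal follows essentially the same route as the paper: truncate the coefficients so that Assumption \ref{assum-1} holds globally, apply Theorem \ref{thm1} to the truncated pair $(X^{(k)},Y^{(k)})$ to get a rate $C_k\delta^{p/2}$, identify $X$ with $X^{(k)}$ and $Y$ with $Y^{(k)}$ on the event that the processes stay in the ball of radius $k$, and control the complementary event by the uniform moment bounds and Chebyshev before sending $\delta\to 0$ and then $k\to\infty$. Your version is in fact slightly more careful than the paper's (you verify that the composed truncation $b(t,\chi_k(x))$ preserves the Hölder--Dini modulus, whereas the paper's multiplicative cutoff $b(t,x)\chi(|x|/k)$ needs an extra argument for this), at the mild cost that your Cauchy--Schwarz treatment of the tail term uses $2p$-moments of $X_0$ rather than the stated $p$-moments.
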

\begin{rem}We verify this conclusion by a method similar to Theorem 1.2 in \cite{BHY}. In \cite{BHY}, the diffusion term is the uniformly bounded and second order continuous differentiable. However, in this paper, $\s$ is the uniformly bounded, which is weaken the conditions of Theorem 1.2 in \cite{BHY} and obtains the same conclusions.
\end{rem}

\section{Proofs of main results}

We also need the following lemma for the proof.
\begin{lem}\label{lem1}
Let the coefficients $b,\sigma$ is the uniformly bounded. For $p\geq 1$ and  $t\in [0,T]$, there exists a positive constant $C>0$ depending on $T, M, p, d$, it holds that
$$
\mE\left[\phi\left(\left|Y_t-Y_{\eta_{\delta}(t)}\right|\right)^p\right]\leq C\delta^{p/2}.
$$
\end{lem}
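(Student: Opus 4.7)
The plan is to combine three ingredients: an explicit formula for the one-step Euler increment, standard BDG moment estimates using the uniform bounds on $b$ and $\sigma$, and Jensen's inequality based on the concavity of $\phi^{2}$.

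First I would exploit the piecewise-constant nature of $\eta_\delta$. For any $t\in[0,T]$ and any $s\in[\eta_\delta(t),t]$ one has $\eta_\delta(s)=\eta_\delta(t)$, so plugging into the definition (\ref{2}) collapses both integrals and gives the closed form
$$
Y_t-Y_{\eta_\delta(t)}=b(\eta_\delta(t),Y_{\eta_\delta(t)})\,(t-\eta_\delta(t))+\sigma(\eta_\delta(t),Y_{\eta_\delta(t)})\,(W_t-W_{\eta_\delta(t)}).
$$

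Second, using $t-\eta_\delta(t)\leq\delta<1$, the uniform bounds $\|b\|_{T,\infty},\|\sigma\|_{T,\infty}\leq M$, and the Burkholder--Davis--Gundy inequality applied to the stochastic integral term, I would derive the raw moment estimate
$$
\mE\big|Y_t-Y_{\eta_\delta(t)}\big|^{q}\preceq\delta^{q/2},\qquad q\geq 1,
$$
with constant depending on $T,M,q,d$. This is the standard one-step increment bound for the Euler scheme with bounded coefficients.

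Third, I would absorb the Dini modulus. Since $\phi\in\cD$, $\phi^{2}$ is concave with $\phi(0)=0$; two consequences I would use are (i) Jensen's inequality $\mE\phi^{2}(Z)\leq\phi^{2}(\mE Z)$, and (ii) the function $t\mapsto\phi^{2}(t)/t$ is nonincreasing, which gives $\phi^{2}(t)\leq\phi^{2}(1)\,t$ for all $t\geq 1$. Writing $Z=|Y_t-Y_{\eta_\delta(t)}|$ and splitting according to $\{Z\leq 1\}$ and $\{Z>1\}$, I would bound $\phi(Z)^{p}\mathbf{1}_{\{Z\leq 1\}}\leq\phi(1)^{p-2}\phi^{2}(Z)$ and, using (ii), $\phi(Z)^{p}\mathbf{1}_{\{Z>1\}}\leq C\,Z^{p/2}$. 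The first piece is controlled via Jensen and step 2; the second is controlled directly by step 2. For $p\in[1,2)$ I would use Hölder to reduce to the $p=2$ case.

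The main obstacle is the last step when $p>2$: $\phi^{p}$ is no longer concave, so Jensen cannot be applied directly to $\phi^{p}$. The dichotomy above is the key device that separates the low-range behaviour (where concavity of $\phi^{2}$ is effective) from the high-range behaviour (where only the moment control from BDG is needed), and it is precisely here that the assumption $\phi\in\cD$ (rather than just $\cD_{0}$) is used.
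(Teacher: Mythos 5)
Your steps 1 and 2 coincide with the paper's: the closed-form one-step increment together with the uniform bounds on $b,\sigma$ and BDG give $\mE\big|Y_t-Y_{\eta_\delta(t)}\big|^q\preceq\delta^{q/2}$, exactly as in the paper. The divergence --- and the gap --- is in your step 3. The paper does not use concavity of $\phi^2$ at all; it writes a Taylor expansion $\phi(t)=\phi(0)+\phi'(0)t+\tfrac12\phi''(\theta t)t^2\le\phi'(0)t=:Mt$ and then simply replaces $\phi(Z)^p$ by $M^pZ^p$, reducing the lemma to the raw moment bound of your step 2. (That linearization is where the constant $M$ in the statement comes from; whether $\phi\in\cD$ really guarantees $\phi'(0)<\infty$ is a separate issue with the paper, not with you.)

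Your substitute does not reach the claimed rate $\delta^{p/2}$. On $\{Z\le1\}$ your bound is $\phi(1)^{p-2}\mE[\phi^2(Z)]\le\phi(1)^{p-2}\phi^2(\mE Z)\le\phi(1)^{p-2}\phi^2(C\delta^{1/2})$. Even for a $\phi$ that is Lipschitz at zero this is of order $\delta$, which equals $\delta^{p/2}$ only when $p=2$; for $p>2$ the $p-2$ powers of $\phi(Z)$ you discarded via $\phi(Z)^{p-2}\le\phi(1)^{p-2}$ are exactly the missing powers of $\delta^{1/2}$. Worse, for a genuine $\phi\in\cD$ such as $\phi(t)=\sqrt t$ (note $\phi^2(t)=t$ is concave and $\int_0^1 s^{-1/2}\,\dif s<\infty$), one has $\phi^2(C\delta^{1/2})\asymp\delta^{1/2}$, nowhere near $\delta^{p/2}$; and indeed taking $b=0$, $\sigma=I$ gives $\mE\big[\phi(|W_t-W_{\eta_\delta(t)}|)^p\big]\asymp\delta^{p/4}$, so no argument using only $\phi\in\cD$ can prove the lemma as stated --- the linear bound $\phi(t)\le Mt$ near $0$ is genuinely needed, and that is precisely the (itself debatable) step the paper takes. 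Your high-range piece also needs a small repair: ``controlled directly by step 2'' gives only $\mE[Z^{p/2}]\preceq\delta^{p/4}$; you should insert $\mathbf{1}_{\{Z>1\}}\le Z^{p/2}$ to get $\mE[Z^p]\preceq\delta^{p/2}$. That piece is fixable, but the low-range piece is not within your framework.
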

\begin{proof}
Owing to $\phi\in\cD$, based on Taylor expansion and the properties of Dini function,  we have $\phi(0)=0$, $\phi'>0$ and $\phi''<0$, so that
$$
\aligned
\phi(t)=&\phi(0)+\phi'(0)t+\frac{\phi''(\theta t)}{2!}t^2,\ \  \theta\in(0,1), \ \ t\in\mR_+\\
\leq&\phi'(0)t:=Mt. \\
\endaligned
$$
Thus, for any $t\in\mR$,
$$
\phi(|t|)\leq M|t|.
$$
For $p\geq 1$, noticing that
$$
\aligned
\phi\left(\left|Y_t-Y_{\eta_{\delta}(t)}\right|\right)^p\leq
&M^p\left|Y_t-Y_{\eta_{\delta}(t)}\right|^p.\\
\endaligned
$$
Using Assumption \ref{assum-1} (a)-(b), we deduce that
$$
\aligned
\left|Y_t-Y_{\eta_{\delta}(t)}\right|^p &\preceq \left|\int^t_{\eta_{\delta}(t)}b(\eta_{\delta}(s),Y_{\eta_{\delta}(s)})\dif s\right|^p+\left| \int^t_{\eta_{\delta}(t)}\sigma(\eta_{\delta}(s),Y_{\eta_{\delta}(s)})\dif W_s\right|^P\\
&\preceq \delta^p+|W_t-W_{\eta_{\delta}(t)}|^p.\\
\endaligned
$$
Hence there exists a positive constant $C=C(T, M, p, d)$, such that, for $p\geq 1$,
$$
\mE\left[\phi\left(\left|Y_t-Y_{\eta_{\delta}(t)}\right|\right)^p\right]
\leq M\mE\left[\left|Y_t-Y_{\eta_{\delta}(t)}\right|^p\right]
\leq C\delta^{p/2}.
$$
\end{proof}

The following lemma is taken from Theorem 2.8  in \cite{F}, which provides the regularity of solution to the Kolmogorov equation associated to the SDE (\ref{1}).

\begin{lem}\label{cor-1}Let $T>0$, for any $\varepsilon\in(0,1)$, there exists $m\in\mN$ such that $0=t_0< t_1< \cdots<t_m= T$, for any $\varphi\in C([t_{j-1},t_{j}];C_b^{\beta}(\mR^d,\mR^d)), j=1,\cdots,m$, there is at least one solution $u$ to the Backward Kolmogorov equation
$$
\frac{\partial u}{\partial t}+\nabla u\cdot b +\frac{1}{2}\Delta u\cdot\s^2=-\varphi, \ \ on\ \  [t_{j-1},t_{j}]\times \mR^d, \ \ u(t_{j},x)=0
$$
of class
$$
u\in C\left([t_{j-1},t_{j}];C_b^{2,\beta'}\left(\mR^d,\mR^d\right)\right)\cap C^1\left([t_{j-1},t_{j}],C_b^{\beta'}\left(\mR^d;\mR^d\right)\right).
$$
For some constant $K$ depending on $j$ and for all $\beta'\in(0,\beta)$, we have
$$
\|D^2u\|_{C_b^{\beta'}\left([t_{j-1},t_{j}]\right)}\leq K\|\varphi\|_{C_b^{\beta}\left([t_{j-1},t_{j}]\right)}
$$
and for some  constant $C_0$, it holds that
$$
\|\nabla u\|_{C_b^{\beta}([t_{j-1},t_{j}])}\leq C_0(t_{j}-t_{j-1})^{1/2}\|\varphi\|_{C_b^{\beta}([t_{j-1},t_{j}])}
$$
At same time,  we can obtain
$$
\|\varphi\|_{C_b^{\beta}([0,T])}C_0(t_{j}-t_{j-1})^{1/2}\leq \varepsilon.
$$
\end{lem}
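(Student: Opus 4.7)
The plan is to treat the backward Kolmogorov equation as a uniformly parabolic Cauchy problem on a sequence of short intervals whose lengths are tuned to produce the final $\varepsilon$-smallness. On each sub-interval $[t_{j-1},t_j]$, the non-degeneracy of $\sigma$ together with the (uniform) boundedness of $b$ and $\sigma$ puts us in the standard range of parabolic Schauder theory, while the sub-interval length $h=t_j-t_{j-1}$ produces the smoothing factor $h^{1/2}$ responsible for the gradient bound. The partition itself is chosen only at the end, once the per-interval estimates are in hand.

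Concretely, I would fix a reference point $x_0\in\mR^d$, freeze the coefficients to form the constant-coefficient operator $L_0=\tfrac12\mathrm{tr}\bigl(a_0 D^2\cdot\bigr)$ with $a_0=(\sigma\sigma^*)(t_{j-1},x_0)$, and invoke Duhamel's formula via the associated Gaussian kernel $p_0$:
$$
(T_0\varphi)(s,x)=\int_s^{t_j}\int_{\mR^d}p_0(r-s,x-y)\varphi(r,y)\dif y\dif r.
$$
Classical heat-kernel Schauder estimates then yield $\|D^2 T_0\varphi\|_{C_b^{\beta'}([t_{j-1},t_j])}\leq K\|\varphi\|_{C_b^{\beta}([t_{j-1},t_j])}$ for any $\beta'<\beta$, together with the short-time gradient smoothing bound $\|\nabla T_0\varphi\|_{C_b^{\beta}([t_{j-1},t_j])}\leq C_0 h^{1/2}\|\varphi\|_{C_b^{\beta}([t_{j-1},t_j])}$; the latter comes from $\nabla T_0\varphi(s,\cdot)=\int_s^{t_j}(\nabla p_0)(r-s)*\varphi(r,\cdot)\,\dif r$ and the $L^1$-bound $\|\nabla p_0(t)\|_{L^1}\lesssim t^{-1/2}$ integrated over $[s,t_j]$.

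I would then recover the true variable-coefficient solution by a contraction argument. Rewriting the PDE as
$$
u=T_0\Bigl(\varphi+b\cdot\nabla u+\tfrac12\mathrm{tr}\bigl((\sigma\sigma^*-a_0)D^2 u\bigr)\Bigr),
$$
the Dini moduli of $b$ and $\sigma$, plus the freedom to refine the partition, make the right-hand side a contraction on the Banach space $\{u\in C([t_{j-1},t_j];C_b^{2,\beta'})\cap C^1([t_{j-1},t_j];C_b^{\beta'})\}$ with zero terminal value. The fixed point yields the desired $u$, and the two displayed estimates are inherited (up to a factor $2$) from the corresponding bounds for $T_0$. The partition condition $\|\varphi\|_{C_b^{\beta}([0,T])}C_0 h^{1/2}\leq\varepsilon$ is then a matter of choosing $h\leq(\varepsilon/(C_0\|\varphi\|_{C_b^{\beta}([0,T])}))^2$ and taking $m=\lceil T/h\rceil$.

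The main obstacle is the gap between the Dini regularity of $b,\sigma$ and the Hölder regularity demanded by textbook Schauder theory. I would bridge it in one of two equivalent ways: either mollify the coefficients to $b_n,\sigma_n\in C_b^{\beta}$, derive all estimates uniformly in $n$ (using only the Dini moduli to control the perturbation term $\sigma\sigma^*-a_0$ on a fixed neighbourhood and shrinking $h$ to absorb the drift term), and pass to the limit; or adapt the Schauder proof directly to Dini coefficients, which forces the strict loss $\beta'<\beta$ appearing in the $D^2u$ estimate. Either route requires careful bookkeeping of how $M=\|b\|_{T,\infty}$, $\|\sigma\|_{T,\infty}$, the ellipticity constant of $a_0$ and the Dini modulus $\phi$ enter the constants $K$ and $C_0$, but no new ingredient beyond the tools above.
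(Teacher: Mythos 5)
You should first be aware that the paper offers no proof of this lemma at all: it is imported verbatim as ``Theorem 2.8 in [F]'' (Flandoli's Saint-Flour notes), so your proposal is a from-scratch reconstruction rather than a variant of an argument in the paper. Your overall architecture --- Duhamel's formula with a frozen-coefficient Gaussian kernel, the $\|\nabla p_0(t)\|_{L^1}\lesssim t^{-1/2}$ smoothing bound to produce the factor $(t_j-t_{j-1})^{1/2}$, and the choice $h\leq(\varepsilon/(C_0\|\varphi\|_{C_b^{\beta}([0,T])}))^2$ at the end --- is the standard and correct skeleton for results of this type, and it also silently repairs the quantifier defect in the statement (the partition must be allowed to depend on $\|\varphi\|_{C_b^{\beta}([0,T])}$, which is harmless since the lemma is only ever applied with $\varphi=b$).

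There are, however, two genuine gaps. First, freezing the coefficients at a single point $x_0$ does not make $\sigma\sigma^*-a_0$ small on all of $\mR^d$: it is only small near $(t_{j-1},x_0)$, and elsewhere it is of order one. Since the Schauder norm of $T_0:C_b^{\beta'}\to C_b^{2,\beta'}$ does \emph{not} decay as $h\to0$ (only the gradient bound does), shrinking the time step cannot rescue the contraction for the second-order perturbation term; you need spatial localization as well --- a partition of unity with variably frozen coefficients (the classical parametrix/Korn device), or the method of continuity combined with global a priori Schauder estimates. Second, for the fixed-point map to land in $C([t_{j-1},t_j];C_b^{2,\beta'})$ you need $\mathrm{tr}\bigl((\sigma\sigma^*-a_0)D^2u\bigr)\in C_b^{\beta'}$, and the product of a merely Dini-continuous coefficient with a $\beta'$-H\"older function need not be $\beta'$-H\"older; your proposed mollification fix is the right idea, but obtaining estimates uniform in the mollification parameter, depending only on the Dini modulus $\phi$, is precisely the substantive content of a Dini--Schauder theorem and not mere ``bookkeeping.'' Both gaps are repairable by known techniques, but as written the contraction step would fail.
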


Now we can give

\begin{proof}[Proof of Theorem \ref{thm1}] Let $T>0$, for any $\varepsilon\in(0,1)$, there is $m\in\mN$,  such that $0=T_0<T_1<\cdots<T_m=T$. For $i=1, \cdots, d$ and $j=1,\cdots,m$, using Lemma \ref{cor-1},  we can get
\begin{align}\label{b}
\frac{\partial u}{\partial t}+\nabla u\cdot b +\frac{1}{2}\Delta u\cdot\s^2=-b, \ \ on\ \  [T_{j-1}, T_{j}]\times \mR^d, \ \ u(T_{j},x)=0,
\end{align}
and $u$ satisfies
\begin{align}\label{a}
\|\nabla u\|_{C_b^{\beta}([T_{j-1}, T_{j}])}\leq C_0(T_{j}-T_{j-1})^{1/2}\|b\|_{C_b^{\beta}([T_{j-1},T_{j}])}\leq \varepsilon.
\end{align}
For $ t\in[T_{j-1},T_j]$, by It\^o's formula and (\ref{b}), we have
$$
\aligned
&u(t,X_t)\\=&u(T_{j-1},X_{j-1})+\int_{T_{j-1}}^t\frac{\partial u}{\partial t}(s,X_s)\dif s+\int_{T_{j-1}}^t\nabla u(s,X_s)\dif X_s\\&+\frac{1}{2}\int_{T_{j-1}}^t\Delta u(s,X_s)\dif \langle X_s,X_s \rangle\\
=&u(T_{j-1},X_{j-1})-\int_{T_{j-1}}^tb(s,X_s)\dif s+\int_{T_{j-1}}^t\<\nabla u(s,X_s),\s(s,X_s)\>\dif W_s.\\
\endaligned
$$
Similarly, we have
$$
\aligned
&u\left(t,Y_t\right)\\=&u\left(T_{j-1},Y_{j-1}\right)+\int_{T_{j-1}}^t\frac{\partial u}{\partial t}\left(s,Y_s\right)\dif s+\int_{T_{j-1}}^t\nabla u\left(s,Y_s\right)\dif Y_s\\&+\frac{1}{2}\int_{T_{j-1}}^t \Delta u\left(s,Y_s\right)\dif  \langle Y_s,Y_s \rangle\\
=&u\left(T_{j-1},Y_{j-1}\right)-\int_{T_{j-1}}^tb\left(s,Y_s\right)\dif s+\int_{T_{j-1}}^t\<\nabla u\left(s,Y_s\right),\s\left(\eta_{\delta}(s),Y_{\eta_{\delta}(s)}\right)\>\dif W_s\\
&+\int_{T_{j-1}}^t\<\nabla u\left(s,Y_s\right), b\left(\eta_{\delta}(s),Y_{\eta_{\delta}(s)}\right)-b\left(s,Y_s\right)\>\dif s.\\
\endaligned
$$
Hence, we can get
\begin{align}\label{thm-1}
\int_{T_{j-1}}^tb(s,X_s)\dif s=u(T_{j-1},X_{j-1})-u(t,X_t)+\int_{T_{j-1}}^t\<\nabla u(s,X_s),\s(s,X_s)\>\dif W_s,
\end{align}
and
\begin{align}
\int_{T_{j-1}}^tb\left(s,Y_s\right)\dif s=&u\left(T_{j-1},Y_{j-1}\right)-u\left(t,Y_t\right)\no+\int_{T_{j-1}}^t\<\nabla u\left(s,Y_s\right),\s\left(\eta_{\delta}(s),Y_{\eta_{\delta}(s)}\right)\>\dif W_s\no\\
&+\int_{T_{j-1}}^t\<\nabla u\left(s,Y_s\right),b\left(\eta_{\delta}(s),Y_{\eta_{\delta}(s)}\right)-b\left(s,Y_s\right)\>\dif s\label{thm-2}.
\end{align}
Combining with (\ref{thm-1}) and (\ref{thm-2}), we have
$$
\aligned
&X_t-Y_t=X_{T_{j-1}}-Y_{T_{j-1}}\\&+\int_{T_{j-1}}^t\left(b(s,X_s)-b(\eta_{\delta}(s),Y_{\eta_{\delta}(s)})\right)\dif s+\int_{T_{j-1}}^t\left(\s(s,X_s)-\s\left(\eta_{\delta}(s),Y_{\eta_{\delta}(s)}\right)\right)\dif W_s\\
=& X_{T_{j-1}}-Y_{T_{j-1}}+\left(u(T_{j-1},X_{T_{j-1}})-u\left(T_{j-1},Y_{T_{j-1}}\right)\right)-\left(u(t,X_t)-u\left(t,Y_t\right)\right)\\
&+ \int_{T_{j-1}}^t\left[\<\nabla u(s,X_s),\s(s,X_s)\>-\<\nabla u\left(s,Y_s\right),\s\left(\eta_{\delta}(s),Y_{\eta_{\delta}(s)}\right)\>\right]\dif W_s\\
&+\int_{T_{j-1}}^t\<\nabla u\left(s,Y_s\right), b\left(s,Y_s\right)-b\left(\eta_{\delta}(s),Y_{\eta_{\delta}(s)}\right)\>\dif s+ \int_{T_{j-1}}^t\left(b\left(s,Y_s\right)-b\left(\eta_{\delta}(s),Y_{\eta_{\delta}(s)}\right)\right)\dif s\\&+\int_{T_{j-1}}^t\left(\s(s,X_s)-\s\left(\eta_{\delta}(s),Y_{\eta_{\delta}(s)}\right)\right)\dif W_s.\\
\endaligned
$$
By (\ref{a}) and the mean-value theorem, we have:
$$
\aligned
&\left|X_t-Y_t\right| \\ \leq& \left|X_{T_{j-1}}-Y_{T_{j-1}}\right|+\left|u(T_{j-1},X_{T_{j-1}})-u\left(T_{j-1},Y_{T_{j-1}}\right)\right|+\left|u
(t,X_t)-u\left(t,Y_t\right)\right|\\
&\quad+ \left|\int_{T_{j-1}}^t\left[\<\nabla u(s,X_s),\s(s,X_s)\>-\<\nabla u\left(s,Y_s\right),\s\left(\eta_{\delta}(s),Y_{\eta_{\delta}(s)}\right)\>\right]\dif W_s\right|\\
&\quad+\|\nabla u\|_{C_b^{\beta}[T_{j-1},T_j]}\int_{T_{j-1}}^t\left|b\left(s,Y_s\right)-b\left(\eta_{\delta}(s),Y_{\eta_{\delta}(s)}\right)\right|\dif s\\
&\quad+ \int_{T_{j-1}}^t\left|b\left(s,Y_s\right)-b\left(\eta_{\delta}(s),Y_{\eta_{\delta}(s)}\right)\right|\dif s+\left|\int_{T_{j-1}}^t\left(\s(s,X_s)-\s\left(\eta_{\delta}(s),Y_{\eta_{\delta}(s)}\right)\right)\dif W_s\right|\\
&\leq (1+\varepsilon)\left|X_{T_{j-1}}-Y_{T_{j-1}}\right|+\varepsilon\left|X_t-Y_t\right|\\
&\quad+ \left|\int_{T_{j-1}}^t\left[\<\nabla u(s,X_s),\s(s,X_s)\>-\<\nabla u\left(s,Y_s\right),\s\left(\eta_{\delta}(s),Y_{\eta_{\delta}(s)}\right)\>\right]\dif W_s\right|\\
&\quad+ (1+\varepsilon)\int_{T_{j-1}}^t\left|Y_s-Y_{\eta_{\delta}(s)}\right|^{\b}\phi\left(\left|Y_s-Y_{\eta_{\delta}(s)}\right|\right)\dif s+(1+\varepsilon)\int_{T_{j-1}}^t\phi\left(\left|s-\eta_{\delta}(s)\right|\right)\dif s\\
&\quad+\left|\int_{T_{j-1}}^t\left(\s(s,X_s)-\s\left(\eta_{\delta}(s),Y_{\eta_{\delta}(s)}\right)\right)\dif W_s\right|.
\endaligned
$$
For all $p\geq 1$,  utilizing Jensen's inequality, H\"older inequality and Lemma \ref{lem1}, we can obtain
$$
\aligned
&\left|X_t-Y_t\right|^p\\
\leq& 6^{p-1}(1+\varepsilon)^p\left|X_{T_{j-1}}-Y_{T_{j-1}}\right|^p+6^{p-1}\varepsilon^p\left|X_t-Y_t\right|^p\\
& +6^{p-1}\left|\int_{T_{j-1}}^t\left[\<\nabla u(s,X_s),\s(s,X_s)\>-\<\nabla u\left(s,Y_s\right),\s\left(\eta_{\delta}(s),Y_{\eta_{\delta}(s)}\right)\>\right]\dif W_s\right|^p\\
&+6^{p-1}(1+\varepsilon)^p(t-T_{j-1})^{p-1}M^p\int_{T_{j-1}}^t\left|Y_s-Y_{\eta_{\delta}(s)}\right|^{p(\b+1)}\dif s\\
& +6^{p-1}(1+\varepsilon)^p(t-T_{j-1})^{p-1}M^p\int_{T_{j-1}}^t\left|s-\eta_{\delta}(s)\right|^p\dif s\\
& +6^{p-1}\left|\int_{T_{j-1}}^t\left(\s(s,X_s)-\s\left(\eta_{\delta}(s),Y_{\eta_{\delta}(s)}\right)\right)\dif W_s\right|^p.\\
\endaligned
$$
Because $\varepsilon$ is arbitrary, there exists $c(p,\varepsilon):=6^{p-1}\varepsilon^p<1$. Then we know
\begin{align}
&\left|X_t-Y_t\right|^p\leq \frac{6^{p-1}(1+\varepsilon)^p}{1-c(p,\varepsilon)}\left|X_{T_{j-1}}-Y_{T_{j-1}}\right|^p\no\\
\quad &+\frac{6^{p-1}}{1-c(p,\varepsilon)}\left|\int_{T_{j-1}}^t\left[\<\nabla u(s,X_s),\s(s,X_s)\>-\<\nabla u\left(s,Y_s\right),\s\left(\eta_{\delta}(s),Y_{\eta_{\delta}(s)}\right)\>\right]\dif W_s\right|^p\no\\
\quad &+\frac{6^{p-1}(1+\varepsilon)^p(t-T_{j-1})^{p-1}M^p}{1-c(p,\varepsilon)}\int_{T_{j-1}}^t\left[\left|Y_s-Y_{\eta_{\delta}(s)}\right|^{p(\b+1)}+(t-T_{j-1})\delta^p\right]\dif s\no\\
\quad &+\frac{6^{p-1}}{1-c(p,\varepsilon)}\left|\int_{T_{j-1}}^t\left(\s(s,X_s)-\s\left(\eta_{\delta}(s),Y_{\eta_{\delta}(s)}\right)\right)\dif W_s\right|^p\label{thm-3}.
\end{align}
Taking the supremum, expectation on both sides of  the above inequality, and using BDG's inequality, for $t\in(T_{j-1},T_j]$, we have
\begin{align}
&\mE\left[\sup_{T_{j-1}\leq u\leq t}\left|X_u-Y_u\right|^p\right]\leq \frac{6^{p-1}(1+\varepsilon)^p}{1-c(p,\varepsilon)}\mE\left[\left|X_{T_{j-1}}-Y_{T_{j-1}}\right|^p\right]+\frac{6^{p-1}C(p,d)T^{\frac{p}{2}-1}}{1-c(p,\varepsilon)}\no\\
&\quad \times\int_{T_{j-1}}^t\mE\left[\sup_{T_{j-1}\leq u\leq s}\left\|\nabla u(u,X_u)\s(u,X_u)-\nabla u\left(u,Y_u\right)\s\left(\eta_{\delta}(u),Y_{\eta_{\delta}(u)}\right)\right\|_{\HS}^p\right]\dif s\no\\
&\quad +\frac{6^{p-1}(1+\varepsilon)^pT^{p-1}M^p}{1-c(p,\varepsilon)}\left[\int_{T_{j-1}}^t\mE\left[\sup_{T_{j-1}\leq u\leq s}\left|Y_u-Y_{\eta_{\delta}(u)}
\right|^{p(\b+1)}\right]\dif s+T\delta^p\right]\no\\
&\quad +\frac{6^{p-1}C(p,d)T^{\frac{p}{2}-1}}{1-c(p,\varepsilon)}\int_{T_{j-1}}^t\mE\left[\sup_{T_{j-1}\leq u\leq s}\left\|\s(u,X_u)-\s\left(\eta_
{\delta}(u),Y_{\eta_{\delta}(u)}\right)\right\|_{\HS}^p\right]\dif s\no\\
&:=\sum_{i=1}^4\mI_i,\no
\end{align}
where $C(p,d)$ is the constant in BDG's inequality. With the help of lemma \ref{lem1} and the Assumption \ref{assum-1} $(a)$, in $\mI_2$,  we have
\begin{align}
&\mE\left[\sup_{T_{j-1}\leq u\leq s}\left\|\nabla u(u,X_u)\s(u,X_u)-\nabla u\left(u,Y_u\right)\s\left(\eta_{\delta}(u),Y_{\eta_{\delta}(u)}\right)\right\|_{\HS}^p\right]\no\\
=&\mE\left[\sup_{T_{j-1}\leq u\leq s}\left\|\nabla u(u,X_u)\s(u,X_u)-\nabla u\left(u,Y_u\right)\s(u,X_u)+\nabla u\left(u,Y_u\right)\s(u,X_u)\right.\right.\no\\
&\left.-\nabla u\left(u,Y_u\right)\s\left(u,Y_u\right)+\nabla u\left(u,Y_u\right)\s\left(u,Y_u\right)-\nabla u\left(u,Y_u\right)\s\left(\eta_{\delta}(u),Y_u\right)\right.\no\\ &\left.
+\nabla u\left(u,Y_u\right)\s\left(\eta_{\delta}(u),Y_u\right)
\left.\left.-\nabla u\left(u,Y_u\right)\s\left(\eta_{\delta}(u),Y_{\eta_{\delta}(u)}\right)\notag\right.\right\|_{\HS}^p\right]\no\\
\leq& 4^{p-1}2^p\varepsilon^p\|\s\|_{T,\infty}^p+4^{p-1}\varepsilon^p\mE\left[\sup_{T_{j-1}\leq u\leq s}\phi(|X_u-Y_u|)^p\right]\no\\
&+4^{p-1}\varepsilon^p\mE\left[\sup_{T_{j-1}\leq u\leq s}\phi(|u-\eta_{\delta}(u)|)^p\right]
+4^{p-1}\varepsilon^p\mE\left[\sup_{T_{j-1}\leq u\leq s}\phi(|Y_u-Y_{\eta_{\delta}(u)}|)^p\right]\no\\
\leq & 4^{p-1}2^p\varepsilon^p\|\s\|_{T,\infty}^p+4^{p-1}\varepsilon^pM^p\mE\left[\sup_{T_{j-1}\leq u\leq s}|X_u-Y_u|^p\right]+4^{p-1}\varepsilon^pM^p\delta^p+4^{p-1}\varepsilon^pC^p\delta^{p/2}\no.
\end{align}
In $\mI_4$, from the properties of Dini-function, it may be chosen the constant $C_0,C_1$, such that
$$
\begin{aligned}
&\mE\left[\sup_{T_{j-1}\leq u\leq s}\left\|\s(u,X_u)-\s\left(\eta_{\delta}(u),Y_{\eta_{\delta}(u)}\right)\right\|_{\HS}^p\right]\\
=&\mE\left[\sup_{T_{j-1}\leq u\leq s}\left\|\s(u,X_u)-\s\left(\eta_{\delta}(u),X_u\right)+\s\left(\eta_{\delta}(u),X_u\right)-\s\left(\eta_{\delta}(u),Y_u\right)+
\s\left(\eta_{\delta}(u),Y_u\right)\right.\right.\\
&-\left.\left.\s\left(\eta_{\delta}(u),Y_{\eta_{\delta}(u)}\right)\right\|_{\HS}^p\right]\\
\leq&3^{p-1}\mE\left[\sup_{T_{j-1}\leq u\leq s}\phi(|u-\eta_{\delta}(u)|)^p\right]+3^{p-1}\mE\left[\sup_{T_{j-1}\leq u\leq s}\phi(|X_u-Y_u|)^p\right]\\&+3^{p-1}\mE\left[\sup_{T_{j-1}\leq u\leq s}\phi(|Y_u-Y_{\eta_{\delta}(u)}|)^p\right] \\
\leq&3^{p-1}M^p\delta^p+3^{p-1}M^p\mE\left(\sup_{T_{j-1}\leq u\leq s}|X_u-Y_u|^p\right)+3^{p-1}M^pC^p\delta^{p/2}.\\
\end{aligned}
$$
Thus, for $\delta\in(0,1)$ and $p\geq1$, there exists the constant $C_2,C_3, C_4$, we know
$$
\aligned
&\mE\left[\sup_{T_{j-1}\leq u\leq t}\left|X_u-Y_u\right|^p\right]\leq \frac{6^{p-1}(1+\varepsilon)^p}{1-c(p,\varepsilon)}\mE\left[\left|X_{T_{j-1}}-Y_{T_{j-1}}\right|\right]^p\\
&+\frac{6^{p-1}c(p,d)T^{\frac{p}{2}-1}(\varepsilon^pM^p4^{p-1}+3^{p-1}M^p)}{1-c(p,\varepsilon)}\int_{T_{j-1}}^t\mE\left[\sup_{T_{j-1}\leq u\leq s}\left|X_u-Y_{u}\right|^p\right]\dif s\\
&+\frac{6^{p-1}(1+\varepsilon)^pT^{\frac{p}{2}-1}}{1-c(p,\varepsilon)}\\
&\ \  \ \ \ \
\times\left[4^{p-1}\varepsilon^pC^p\delta^{p/2}+4^{p-1}\varepsilon^pM^p\delta^p+4^{p-1}2^p\varepsilon^pK^p +3^{p-1}
C^p\delta^{p/2}+3^{p-1}M^p\delta^p\right]\\
&+\frac{6^{p-1}T^{p}(1+\varepsilon)^pM^p}{1-c(p,d,\varepsilon)}\left[C\delta^{p(\b+1)/2}+T\delta^p\right]\\
\leq&C_2\mE\left[\left|X_{T_{j-1}}-Y_{T_{j-1}}\right|\right]^p+C_3\int_{T_{j-1}}^t\mE\left[\sup_{T_{j-1}\leq u\leq s}\left|X_u-Y_{u}\right|^p\right]\dif s+C_4\delta^{p/2}.\\
\endaligned
$$
Next, we prove by the Lemma \ref{lem1} that for each $j=1,\cdots,m$,
\begin{equation}\label{3}
\mE\left[\sup_{T_{j-1}\leq u\leq t}\left|X_u-Y_u\right|^p\right]\leq A_j\delta^{p/2},\ \ t\in(T_{j-1},T_j].
\end{equation}
where $A_1=C_4e^{C_3T}$ and $A_j=(C_2A_{j-1}+C_4)e^{C_3T}$, for $j=2,\cdots,m$. If  $j=1$, since $T_0=0$,  $\forall t\in(0,T_1]$, we have
$$
\mE\left[\sup_{0\leq u\leq t}\left|X_u-Y_u\right|^p\right]\leq C_3\int_0^t\mE \left[\sup_{0\leq u\leq s}\left|X_u-Y_u\right|^p\right]\dif s+C_4\delta^{p/2}.
$$
Using Gronwall's  inequality, we can get
$$
\mE\left[\sup_{0\leq u\leq t}\left|X_u-Y_u\right|^p\right]\leq C_4e^{C_3T}\delta^{p/2},\ \ t\in(0,T_1].
$$
We assume that   (\ref{3})  holds  for  $j=1,2,\cdots,i-1$ with $2\leq i\leq m$. Then $\forall t\in(T_{i-1},T_{i}]$, we realize
$$
\aligned
\mE\left[\sup_{T_{i-1}\leq u\leq t}\left|X_u-Y_u\right|^p\right]\leq &C_2\mE\left[\left|X_{T_{i-1}}-Y_{T_{i-1}}\right|^p\right]\\
&+C_3\int_{T_{i-1}}^{t}\mE\left[\sup_{T_{i-1}\leq u\leq s}\left|X_u-Y_u\right|^p\right]\dif s+C_4\delta^{p/2}\\
\leq &C_3\int_{T_{i-1}}^{t}\mE\left[\sup_{T_{i-1}\leq u\leq s}\left|X_u-Y_u\right|^p\right]\dif s+(C_2A_{i-1}+C_4)\delta^{p/2}.\\
\endaligned
$$
By once more Gronwall's inequality, it holds that
$$
\mE\left[\sup_{T_{i-1}\leq u\leq t}\left|X_u-Y_u\right|^p\right]\leq (C_2A_{i-1}+C_4)e^{C_2T}\delta^{p/2}=A_i\delta^{p/2}, \ \ t\in(T_{i-1},T_{i}].
$$
Hence $\forall j=1,\cdots,m$, (\ref{3}) is true.  And we draw the conclusion that
$$
\mE\left[\sup_{0\leq s\leq T}\left|X_u-Y_u\right|^p\right]\leq \sum_{j=1}^m\mE\left[\sup_{T_{j-1}\leq u\leq T_j}\left|X_u-Y_u\right|^p\right]
\leq \sum_{j=1}^mA_j\delta^{p/2}:=\beta\delta^{p/2}.
$$
So the proof is finished.
\end{proof}

\begin{proof}[Proof of Theorem \ref{thm2}]
Let $\chi\in C_b^{\infty}(\mR_+)$ is the cut-off function,  such that $0\leq \chi\leq 1$,  $\chi(r)=1$ for $r\in(0,1)$, and $\chi(r)=0$,  for $r\geq 2$ .  For any $t\in[0,T]$ and $k\geq 1$, let
$$
b^{(k)}(t,x)=b(t,x)\chi\left(\frac{|x|}{k}\right),\ \ \sigma^{(k)}(t,x)=\sigma\left(t,\chi\left(\frac{|x|}{k}x\right)\right),\ \ x\in\mR^n.
$$
Fixed $k\geq 1$, we have,
$$
X_t^{(k)}=x+\int_0^tb^{(k)}\left(s,X_s^{(k)}\right)\dif s+\int_0^t\s^{(k)}\left(s,X_s^{(k)}\right)\dif W_s,\ \ t\in(0,T].
$$
The corresponding continuous-time Euler-Maruyama is
\begin{equation}\label{4}
Y_t^{(k)}=x+\int_0^tb^{(k)}\left(\eta_{\delta}(s),Y_{\eta_{\delta}(s)}^{(k)}\right)\dif s+\int_0^t\s^{(k)}\left(\eta_{\delta}(s),Y_{\eta_{\delta}(s)}^{(k)}\right)\dif W_s, \ \ t\in(0,T].
\end{equation}
Using the BDG, H\"older and Gronwall inequality, for all $p\geq1$, for some $C_T$, we have (see the proof Theorem 1.2 in \cite{BHY})
\begin{equation}\label{5}
\aligned
&\mE\left(\sup_{0\leq t\leq T}|X_t|^p\right)+\mE\left(\sup_{0\leq t\leq T}\left|Y_t\right|^p\right)+\mE\left(\sup_{0\leq t\leq T}\left|X_t^{(k)}\right|^p\right)+\mE\left(\sup_{0\leq t\leq T}\left|Y_t^{(k)}\right|^p\right)\\ \leq &C_T\left(1+\mE|X_0|^p\right)<+\infty.\\
\endaligned
\end{equation}
Since
$$
\aligned
\mE\left(\sup_{0\leq t\leq T}\left|X_t-Y_t\right|^p\right)\leq&3^{p-1}\mE\left(\sup_{0\leq t\leq T}\left|X_t-X_t^{(k)}\right|^p\right) \\
&+ 3^{p-1}\mE\left(\sup_{0\leq t\leq T}\left|X_t^{(k)}-Y_t^{(k)}\right|^p\right)\\
&+ 3^{p-1}\mE\left(\sup_{0\leq t\leq T}\left|Y_t-Y_t^{(k)}\right|^p\right)\\
=: &\mI_1+\mI_2+\mI_3,\\
\endaligned
$$
applying the Chebyshev inequality and $(\ref{5})$, we can deduce
$$
\aligned
\mI_1+\mI_3\preceq&\mE\left(\sup_{0\leq t\leq T}\left|X_t-X_t^{(k)}\right|^p\mI_{\{\sup_{0\leq t\leq T}|X_t|\geq k\}}\right)\\
&+\mE\left(\sup_{0\leq t\leq T}\left|Y_t-Y_t^{(k)}\right|^p\mI_{\{\sup_{0\leq t\leq T}\left|Y_t\right|\geq k\}}\right)\\
\preceq& \left(\mE\left(\sup_{0\leq t\leq T}|X_t|^p\right)+\mE\left(\sup_{0\leq t\leq T}\left|X_t^{(k)}\right|^p\right)\right)\frac{\mE\left(\sup_{0\leq t\leq T}|X_t|\right)}{k}\\
&+\left(\mE\left(\sup_{0\leq t\leq T}\left|Y_t\right|^p\right)+\mE\left(\sup_{0\leq t\leq T}\left|Y_t^{(k)}\right|^p\right)\right)\frac{\mE\left(\sup_{0\leq t\leq T}\left|Y_t\right|\right)}{k}\\
\preceq&\frac{1}{k}.\\
\endaligned
$$
For the terms $\mI_2$, by the Theorem \ref{thm1}, we have
$$
\mI_2\preceq\beta_1\delta^{p/2},
$$
where the constant $\beta_1>1$ depending on $T, p, d, \|\s\|_{T,\infty}, M, k, \b$. Consequently, we conclude that
$$
\mE\left(\sup_{0\leq t\leq T}\left|X_t-Y_t\right|^p\right)\preceq\frac{1}{k}+\b_1\d^{\frac{p}{2}}.
$$
For any $\varepsilon>0$, taking $k=\frac{1}{\varepsilon}$ and $\delta\rightarrow 0$, implies that
$$
\lim_{\delta\rightarrow 0}\mE\left(\sup_{0\leq t\leq T}\left|X_t-Y_t\right|^p\right)=0.
$$
Thus, the proof of Theorem \ref{thm2} can be complete.
\end{proof}

\end{document}